\newcommand{\bbN}{{\mathbf N}}
\newcommand{\bbR}{{\mathbf R}}
\newcommand{\bbZ}{{\mathbf Z}}
\newcommand{\bbC}{{\mathbf C}}
\newcommand{\bfH}{{\mathbf H}}
\newcommand{\calC}{\mathcal{C}}
\newcommand{\SL}{\operatorname{SL}}
\newcommand{\PSL}{\operatorname{PSL}}
\newcommand{\tr}{\operatorname{tr}}
\newcommand{\Isom}{\operatorname{Isom}}
\newcommand{\Prob}{\operatorname{Prob}}
\newcommand{\diam}{\operatorname{diam}}
\newcommand{\Homeo}{\operatorname{Homeo}}
\newcommand{\Dg}{\operatorname{\mathscr{D}_{\Gamma}}}
\newcommand{\Teich}{\operatorname{\mathscr{T}}}
\newcommand{\Rmn}{\mathscr{R}}
\newcommand{\QF}{\operatorname{\mathscr{Q}\mathscr{F}}}
\newcommand{\Ax}[1]{\operatorname{Ax}_{{#1}}}
\newcommand{\Lip}{\operatorname{Lip}}
\newcommand{\Bus}{\operatorname{B}}
\newcommand{\overto}[1]{{\buildrel{#1}\over\longrightarrow}}
\newcommand{\setdef}[2]{ \left\{ {#1}\ |\ {#2} \right\} }
\newcommand{\dLip}[2]{\rho_{\operatorname{Lip}}\left({#1},{#2}\right)}
\newtheorem{theorem}{Theorem}[section]
\newtheorem{mthm}{Theorem}
\newtheorem{lemma}[theorem]{Lemma}
\newtheorem{prop}[theorem]{Proposition}
\newtheorem{remark}[theorem]{Remark}
\newtheorem{question}[theorem]{Question}
\numberwithin{equation}{section}
\title{Quasi-Fuchsian vs Negative curvature metrics\\ on surface groups}
\author{Ethan Fricker}
\address{University of Illinois at Chicago}
\email{efrick3@uic.edu}
\author{Alex Furman}
\address{University of Illinois at Chicago}
\email{furman@uic.edu}
\begin{document}
%%%%%%%%%%%%%%%%%% beginning of content %%%%%%%
\date{\today}

\begin{abstract}
	We compare two families of left-invariant metrics on a surface group 
	$\Gamma=\pi_1(\Sigma)$ in the context of course-geometry.
	One family comes from Riemannian metrics of negative curvature on the 
	the surface $\Sigma$, and another from quasi-Fuchsian representations
	of $\Gamma$. We show that the Teichmuller space $\Teich(\Sigma)$ 
	is the only common part of these two families, even 
	when viewed from the coarse-geometric perspective.
\end{abstract}

\maketitle

\begin{center}
    \emph{To Benjy Weiss with gratitude and admiration}
\end{center}

\section{Introduction and Statement of the main result} 

\subsection{Introduction and Background}
Let $\Sigma$ be a closed surface of genus at least two, 
and $\Gamma=\pi_1(\Sigma)$ its fundamental group. 
The Teichmuller space $\Teich(\Sigma)$ has several equivalent descriptions:
as the moduli space of (i) complex structures, or (ii) conformal structures, 
or (iii) Riemannian structures of constant curvature $-1$  on $\Sigma$,
or as (iv) the space of discrete cocompact representations 
$\Gamma\to \PSL_2(\bbR)$, up to conjugation.
The latter two points of view can be extended as follows:
\begin{itemize}
    \item $\Rmn(\Sigma)$ -- the space of all Riemannian structures 
    of possibly variable negative curvature, up to isotopy and scaling.
    \item $\QF(\Sigma)$ -- the space of all convex cocompact representations 
    \[
        \Gamma=\pi_1(\Sigma)\overto{} \PSL_2(\bbC)\cong\Isom^+(\bfH^3),
    \]
    up to conjugation. 
\end{itemize}
Both $\Rmn(\Sigma)$ and $\QF(\Sigma)$ arise from convex cocompact isometric
$\Gamma$-actions on CAT(-1) spaces: 
the $\Gamma$-action by deck transformations on the
universal cover $(\tilde{\Sigma},d_{\tilde{g}})$ in the Riemannian case, 
and the $\Gamma$-action on $\bfH^3$ in the quasi-Fuchsian case.

\medskip

We can put these notions into an even broader context by looking at the space 
$\Dg$ of equivalence classes $[d]$ of
left-invariant metrics $d$ on $\Gamma$ obtained from restricting 
the metric of the underlying 
Gromov-hyperbolic space to a $\Gamma$-orbit.
Here two metrics $d, d'$ on $\Gamma$ are equivalent if they are bounded distance 
from each other after scaling:
\[
    d\sim d'\qquad\textrm{if}\qquad \exists k, A:\quad |d'(\gamma_1,\gamma_2)-k\cdot d(\gamma_1,\gamma_2)|\le A.
\] 
This perspective, introduced by the second author in \cite{Fur} 
(see also more recent treatment in Bader--Furman \cite{BF}),  
allows to observe possible "geometries" of $\Sigma$ from the "outside" by 
studying the corresponding classes $[d]\in\Dg$ of metrics $d$ on $\Gamma$.
The space $\Dg$ can be defined for a general non-elementary Gromov hyperbolic group $\Gamma$,
and $\Dg$ contains classes of metrics on $\Gamma$ from various sources, such as 
word metrics on $\Gamma$, Green metrics associated with symmetric generating 
random walks on $\Gamma$ (see Blach{\`e}re--Ha{\"{\i}}ssinsky--Mathieu \cites{BHM1, BHM2}), 
Anosov representations of $\Gamma$ in higher rank simple Lie groups 
(see Dey--Kapovich \cite{DK}), etc.

To avoid ambiguity in scaling we can normalize metrics $d$ by the growth
\[
    h_d=\lim_{R\to\infty} \frac{1}{R}\log\#\setdef{\gamma\in\Gamma}{d(\gamma,e)<R},     
\]
replacing $d$ by $\hat{d}=h_d\cdot d$, so that $h_{\hat{d}}=1$.
For $\delta\in \Dg$ we can define:
\begin{itemize}
    \item 
    Marked Length Spectrum $\ell_\delta:\Gamma\to\bbR_+$ given by the limit
    \[
        \ell_\delta\left(\gamma\right)=\lim_{n\to\infty} \frac{\hat{d}(\gamma^n,e)}{n}
    \]
    where $\delta=[d]$ and $\hat{d}=h_d\cdot d$. 
    Note that $\ell_\delta$ is constant on conjugacy classes,
    so we can write it as $\ell_\delta:\calC_\Gamma\to\bbR_+$.
    \item 
    Patterson-Sullivan-like $\Gamma$-invariant measure class 
    $[\nu_\delta^{\rm PS}]$ on $\partial\Gamma$ (see Coorneart \cite{Coor},
    and \cites{Fur, BF}).
    \item 
    Bowen--Margulis--Sullivan-like $\Gamma$-invariant Radon measure $m_\delta^{\rm BMS}$
    on the space $\partial^{(2)}\Gamma$ of distinct pairs $(\xi,\eta)$ of points on $\partial\Gamma$ (see \cites{Fur, BF}).
\end{itemize}
In  \cite{Fur} (see also Bader--Furman \cite{BF}), it was shown that each $\delta\in\Dg$
is determined by each of these objects. 
Furthermore, extending a prior work of Bader--Muchnik \cite{BM}, Garncarek \cite{Garn}
showed that for each $\delta\in\Dg$ the quasi-regular unitary $\Gamma$-representation 
\[
    \pi_\delta:\Gamma\overto{}U(\partial\Gamma,\nu_\delta^{\rm PS})
\]
is irreducible, and that the map $\Dg\overto{}\hat\Gamma$, $\delta\mapsto \pi_\delta$,
is also injective.
Thus $\Dg$ can be embedded into any obe of the the following spaces:
\[
    \bbR_+^{\calC_\Gamma}, \qquad\Prob(\partial\Gamma),\qquad
    {\rm Meas}_\Gamma\left(\partial^{(2)}\Gamma\right),\qquad \hat{\Gamma}.
\]
The space $\Dg$ is also equipped with a natural metric:
given two classes $\delta=[d]$, $\delta'=[d']$ in $\Dg$ we can define the
(log) Lipschitz distance by
\[
    \dLip{\delta}{\delta'}:=\log\left( \inf 
    \setdef{\frac{K}{k}}{\exists A,\ k\cdot d-A\le d'\le K\cdot d+A}\right).
\]
It is clear from the definition that $\dLip{-}{-}$ is symmetric 
and satisfies the triangle inequality.
One can see that for any $a,b\in\Gamma\setminus\{e\}$ one has 
\[
    \left|\log\left(\frac{\ell_\delta(a)}{\ell_\delta(b)}:
    \frac{\ell_{\delta'}(a)}{\ell_{\delta'}(b)}\right)\right|
    \le \dLip{\delta}{\delta'}.
\]
This shows that $\dLip{\delta}{\delta'}=0$ implies $\ell_\delta=\ell_{\delta'}$,
which occurs only when $\delta=\delta'$. So $\dLip{}{}$ is indeed a metric on $\Dg$
(see also a recent work of Cantrell--Tanaka \cite{Cantrell+Tanaka} for a more
detailed picture).

\subsection{Riemannian and Quasi-Fuchsian structures on Surfaces}
In this paper we focus on surface group $\Gamma=\pi_1(\Sigma)$ and two specific
sources for $\delta\in\Dg$: namely $\Rmn(\Sigma)$ and $\QF(\Sigma)$.

For the case of negatively curved Riemannian metric $g$ on $\Sigma$, 
fix $x\in \tilde{\Sigma}$ and consider the metric on $\Gamma$
\[
    d_{g,x}(\gamma_1,\gamma_2):=d_{\tilde{g}}(\gamma_1x,\gamma_2x).
\]
Since $|d_{g,x}-d_{g,x'}|\le \diam(\Sigma,g)$ the class $[d_{g,x}]$ 
does not depend on the choice of $x\in \tilde{\Sigma}$, 
and we can denote this class by $\delta_g=[d_{g,x}]$.
Note that $h_{d_{g,x}}$ is the topological entropy of the geodesic flow
on the unit tangent bundle $T^1\Sigma$ to $\Sigma$, and we assume that all $g\in\Rmn(\Sigma)$
are normalized so that $h_{d_{g,x}}=1$.
We have a map
\begin{equation}\label{e:Rmn2Dg}
    i:\Rmn(\Sigma)\ \overto{}\ \Dg.
\end{equation}
The Marked Length Spectrum Rigidity Conjecture, that for surfaces
was proved by Otal \cite{Otal} and Croke \cite{Croke}, 
asserts that a Riemannian structure $g$ of variable negative curvature on a surface $\Sigma$ 
is uniquely determined by the function $\ell_g:\calC_\Gamma\to\bbR$.
As a consequence, we obtain:
\begin{prop}\label{P:MLSR}
    The map $\Rmn(\Sigma)\overto{} \Dg$, $i:g\mapsto \delta_g$, is injective.
\end{prop}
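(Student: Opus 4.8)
The plan is to deduce this from the Marked Length Spectrum Rigidity theorem of Otal and Croke, once one has matched the abstract marked length spectrum $\ell_{\delta_g}$ of the class $\delta_g\in\Dg$ with the geometric marked length spectrum $\ell_g\colon\calC_\Gamma\to\bbR_+$ recording lengths of closed $g$-geodesics.

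\emph{Step 1.} First I would record that $\delta\mapsto\ell_\delta$ is a genuine function on $\Dg$, i.e.\ it does not depend on the chosen representative $d$ of $\delta=[d]$: if $|d'-k\cdot d|\le A$ then comparing growth rates gives $h_{d'}=h_d/k$, so the normalized metrics $\hat d'=h_{d'}d'$ and $\hat d=h_d d$ stay within bounded distance, and this bounded error is killed by the limit defining $\ell_\delta(\gamma)=\lim_n \hat d(\gamma^n,e)/n$. Consequently, $\delta_g=\delta_{g'}$ for $g,g'\in\Rmn(\Sigma)$ forces $\ell_{\delta_g}=\ell_{\delta_{g'}}$ as functions on $\calC_\Gamma$.

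\emph{Step 2.} Next I would identify $\ell_{\delta_g}$ with $\ell_g$. Since every $g\in\Rmn(\Sigma)$ is normalized so that $h_{d_{g,x}}=1$, we have $\hat d_{g,x}=d_{g,x}$ and hence
\[
    \ell_{\delta_g}(\gamma)=\lim_{n\to\infty}\frac{d_{\tilde{g}}(\gamma^n x,x)}{n}.
\]
This limit is the stable translation length of $\gamma$ acting as a hyperbolic isometry of the proper CAT$(-1)$ space $(\tilde{\Sigma},d_{\tilde{g}})$; such an isometry has a unique invariant geodesic axis, whose image in $\Sigma$ is the unique closed $g$-geodesic in the free homotopy class of $\gamma$, so the limit equals that geodesic's $g$-length, namely $\ell_g(\gamma)$. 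Thus $\ell_g=\ell_{g'}$ on $\calC_\Gamma$. (Even without the entropy normalization one would get only that $\ell_g$ and $\ell_{g'}$ are proportional, which is still enough, since $\Rmn(\Sigma)$ is taken modulo scaling.)

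\emph{Step 3.} Finally I would invoke the Otal--Croke solution of the Marked Length Spectrum Rigidity Conjecture for surfaces: a negatively curved Riemannian metric on $\Sigma$ is determined, up to an isometry isotopic to the identity, by its marked length spectrum $\calC_\Gamma\to\bbR_+$. Since $\Rmn(\Sigma)$ consists precisely of such metrics up to isotopy (and scaling, already fixed by the entropy normalization), we conclude $g=g'$ in $\Rmn(\Sigma)$, proving injectivity of $i$.

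The substantial content sits entirely in the two cited theorems; the only step needing genuine (though routine) care is Step 2 --- verifying that restricting $d_{\tilde{g}}$ to a $\Gamma$-orbit and passing to the stable translation length returns exactly the closed-geodesic length and not merely a comparable quantity. This is standard negatively curved geometry (existence and uniqueness of axes, together with the quasi-isometry between a $\Gamma$-orbit and the ambient universal cover), so I anticipate no real obstacle, and the proposition amounts to a clean concatenation of the normalization convention, this identification, and the Otal--Croke theorem.
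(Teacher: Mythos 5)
Your proposal is correct and follows the same route as the paper, which deduces the proposition directly from the Otal--Croke marked length spectrum rigidity theorem; the paper leaves implicit the routine verifications you spell out (well-definedness of $\ell_\delta$ on classes and the identification of the stable translation length of $d_{g,x}$ with the length of the closed $g$-geodesic in the free homotopy class). No gaps.
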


Our second source of examples, are quasi-Fuchsian representations.
For $q\in\QF(\Sigma)$ choose a representation 
$\pi:\Gamma\to \Isom^+(\bfH^3)\cong \PSL_2(\bbC)$ 
in this class and a point $y\in\bfH^3$ and consider 
the metric on $\Gamma$:
\[
    d_{\pi,y}(\gamma_1,\gamma_2):=d_{\bfH^3}\left(\pi(\gamma_1).y,\pi(\gamma_2).y\right).
\]
The class $[d_{\pi,y}]$ does not depend on the choice of $y\in\bfH^3$
and remains unchanged if $\pi$ is replaced by
a conjugate $\gamma\mapsto g \pi(\gamma)g^{-1}$;
thus we write $\delta_q$ for $[d_{\pi,y}]$.
This gives a well defined map 
\begin{equation}\label{e:QF2Dg}
    j:\QF(\Sigma)\ \overto{}\ \Dg.
\end{equation}
One can deduce from a work of Burger \cite{Burger} (or Dal'bo--Kim \cite{Dalbo+Kim}), 
the following.
\begin{prop}\label{P:QFR}
    The map $\QF(\Sigma)\overto{} \Dg$, $j:q\mapsto \delta_{q}$, is injective.
\end{prop}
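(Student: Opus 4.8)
The plan is to reduce the injectivity of $j$ to a marked length spectrum rigidity theorem for convex cocompact subgroups of $\Isom(\bfH^3)$ and then to invoke the cited results of Burger \cite{Burger} and Dal'bo--Kim \cite{Dalbo+Kim}. Concretely, suppose $q_1,q_2\in\QF(\Sigma)$ satisfy $\delta_{q_1}=\delta_{q_2}$; pick representatives $\pi_1,\pi_2\colon\Gamma\to\PSL_2(\bbC)$ in these classes and base points $y_1,y_2\in\bfH^3$. Since $\Gamma$ is a closed surface group it is torsion free, and a convex cocompact representation is faithful with discrete image and no parabolics, so $\pi_i(\gamma)$ is loxodromic for every $\gamma\neq e$. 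The identity $\delta_{q_1}=\delta_{q_2}$, i.e.\ $[d_{\pi_1,y_1}]=[d_{\pi_2,y_2}]$, gives $k>0$ and $A$ with $|d_{\pi_2,y_2}(\gamma_1,\gamma_2)-k\,d_{\pi_1,y_1}(\gamma_1,\gamma_2)|\le A$ for all $\gamma_1,\gamma_2$; applying this with $(\gamma_1,\gamma_2)=(e,\gamma^n)$, dividing by $n$ and letting $n\to\infty$ yields
\[
    \ell_{\bfH^3}\!\left(\pi_2(\gamma)\right)=k\cdot\ell_{\bfH^3}\!\left(\pi_1(\gamma)\right)\qquad(\gamma\in\Gamma),
\]
where $\ell_{\bfH^3}(g)=\lim_n d_{\bfH^3}(g^ny,y)/n$ denotes the translation length on $\bfH^3$. (Equivalently one runs the argument through $\ell_\delta$: by \cite{Fur,BF} the class $\delta$ is determined by $\ell_\delta$, while $\ell_{\delta_q}$ is the translation length spectrum of $\pi$ rescaled by $h_{d_{\pi,y}}$, which for a convex cocompact Kleinian group is the Hausdorff dimension of its limit set; comparing the rescalings identifies $k=h_{d_{\pi_1,y_1}}/h_{d_{\pi_2,y_2}}$.) It therefore suffices to prove the rigidity statement: \emph{if $\pi_1,\pi_2\colon\Gamma\to\PSL_2(\bbC)$ are convex cocompact with $\ell_{\bfH^3}\circ\pi_2=k\,(\ell_{\bfH^3}\circ\pi_1)$ for some $k>0$, then $k=1$ and $\pi_2$ is conjugate to $\pi_1$ in $\Isom(\bfH^3)$} --- in which case $q_1=q_2$.

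The mechanism behind this rigidity, which is the content extracted from \cite{Burger} and \cite{Dalbo+Kim}, runs as follows. Convex cocompactness makes each orbit map $\Gamma\to\bfH^3$ a quasi-isometric embedding extending to a $\Gamma$-equivariant homeomorphism $\partial\Gamma\xrightarrow{\ \sim\ }\Lambda_{\pi_i}\subset\partial\bfH^3=S^2$ onto the limit set, and the orbit equivalence $\pi_1(\gamma)y_1\mapsto\pi_2(\gamma)y_2$ induces a $\Gamma$-equivariant quasisymmetric homeomorphism $\psi\colon\Lambda_{\pi_1}\to\Lambda_{\pi_2}$. A cross-ratio formula of Otal type recovers the boundary cross-ratio on each $\Lambda_{\pi_i}$ from the marked length spectrum of the associated CAT$(-1)$ action, so the proportionality $\ell_2=k\ell_1$ says that $\psi$ matches, up to the scalar $k$, the two M\"obius cross-ratio structures inherited from $S^2$; equivalently, the Manhattan curve of the pair $(\pi_1,\pi_2)$ is the straight segment joining $(h_1,0)$ and $(0,h_2)$, with $h_i$ the critical exponent of $\pi_i$. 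The conformal rigidity of the round sphere along limit sets (in the spirit of Tukia--Sullivan, and at the root of Burger's Manhattan-curve rigidity) then forces such a $\psi$ to be the restriction of an element $g\in\PGL_2(\bbC)=\Isom(\bfH^3)$. Hence $g\pi_1(\gamma)g^{-1}$ and $\pi_2(\gamma)$ agree on the infinite set $\Lambda_{\pi_2}$, so they coincide for all $\gamma$, whence $\pi_2=g\pi_1(\cdot)g^{-1}$; and since conjugate isometries of $\bfH^3$ have the same translation length, $k=1$.

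The main obstacle is precisely this rigidity step --- promoting proportionality of the translation length spectra to conjugacy of the representations. It rests on two substantial ingredients: recovering the boundary M\"obius structure of the limit set from the marked length spectrum alone, where the intersection-number and Manhattan-curve machinery of \cite{Burger} (respectively the boundary-map analysis of \cite{Dalbo+Kim}) does the real work; and the conformal rigidity of $S^2$ ensuring that a $\Gamma$-equivariant quasisymmetric map between limit sets respecting the cross-ratio structure is globally M\"obius. The remaining points --- torsion-freeness and absence of parabolics, existence and equivariance of the boundary maps, and the passage from the coarse identity $\delta_{q_1}=\delta_{q_2}$ to proportional translation lengths --- are routine.
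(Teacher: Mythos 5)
Your proposal is correct and follows essentially the same route as the paper, which gives no independent argument beyond passing from the equality $\delta_{q_1}=\delta_{q_2}$ to proportional marked translation-length spectra and then invoking the rigidity results of Burger \cite{Burger} and Dal'bo--Kim \cite{Dalbo+Kim} for convex cocompact representations. Your additional sketch of the cross-ratio/Manhattan-curve mechanism behind that rigidity is consistent with those references and does not change the approach.
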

Hence one might view each of $\Rmn(\Sigma)$ and $\QF(\Sigma)$ as being 
embedded in $\Dg$. 

\begin{remark}\label{R:connected-images}
    We note in passing that the uniformization theorem allows us to view
$\Rmn(\Sigma)$ as a bundle over $\Teich(\Sigma)$ with fibers that can be identified
with the positive cone $C^\infty_+(\Sigma)/\bbR_+$; in particular $\Rmn(\Sigma)$ is connected.
One can show that the map (\ref{e:Rmn2Dg}) is continuous, and so the image $i(\Rmn(\Sigma))$ 
in $\Dg$ is connected.  

Ahlfors and Bers showed that $\QF(\Sigma)$ can be identified with $\Teich(\Sigma)\times\Teich(\Sigma)$, and is in particular connected. 
The map (\ref{e:QF2Dg}) can be shown to be continuous;
hence the image $j(\QF(\Sigma))$ is a connected subset of $\Dg$. 
\end{remark}

It is natural to wonder whether the intersection
\[
    i(\Rmn(\Sigma))\cap j(\QF(\Sigma))\subset \Dg
\]
contains anything except for the image of $\Teich(\Sigma)$.
In other words, is it true that given a quasi-Fuchsian representation 
$\pi:\Gamma\overto{}\PSL_2(\bbC)$
and a negatively curved metric $g$ on the surface $\Sigma$, there exist constants $k,A$ and points $x\in\tilde{\Sigma}$, $y\in\bfH^3$, so that 
\[
    k \cdot d_{\tilde{g}}(\gamma.x,x)-A\le d_{\bfH^3}(\pi(\gamma).y,y)\le 
    k\cdot d_{\tilde{g}}(\gamma.x,x)+A\qquad(\gamma\in\Gamma)
\]
only if $g$ has constant curvature, $\pi$ is conjugate into $\PSL_2(\bbR)$,
and $(\Sigma,g)$ and $\pi$  represent the same point in $\Teich(\Sigma)$?

Our main result answers this affirmatively.
\begin{mthm}\label{T:main}
    The images of $\Rmn(\Sigma)$ and $\QF(\Sigma)$ in $\Dg$ have only $\Teich(\Sigma)$ 
    in common. Moreover, for any $q\in \QF(\Sigma)\setminus \Teich(\Sigma)$ there is
    $\alpha_q>0$ so that
    \[
        \dLip{\delta_q}{\delta_g}\ge \alpha_q>0
    \]
    for all $g\in\Rmn(\Sigma)$.
\end{mthm}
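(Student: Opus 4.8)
The plan is to reduce the theorem to a statement about marked length spectra, pass to a limit in the space of geodesic currents, and then invoke a rigidity property of incompressible surfaces inside quasi-Fuchsian manifolds.

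\emph{Reduction.} The two assertions are really one: since $j(\Teich(\Sigma))=i(\Teich(\Sigma))$ — a representation conjugate into $\PSL_2(\bbR)$ preserves a totally geodesic $\bfH^2\subset\bfH^3$, and an orbit metric taken there is exactly that of the corresponding hyperbolic surface — the first claim follows from the second together with the fact that $\dLip{-}{-}$ is a metric. So fix $q\in\QF(\Sigma)\setminus\Teich(\Sigma)$, pick a representative $\pi\colon\Gamma\to\PSL_2(\bbC)$, and let $\delta(\pi)=h_{d_{\pi,y}}$ be its critical exponent; recall $\delta(\pi)>1$ precisely because $\pi$ is not Fuchsian. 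By the length-ratio inequality recorded before the theorem, $\dLip{\delta_q}{\delta_{g_n}}\to 0$ for a sequence $g_n\in\Rmn(\Sigma)$ would force the projective marked length spectra to converge, and comparing entropies ($h_{d_{g_n,x}}=1$, $h_{d_{\pi,y}}=\delta(\pi)$) pins the scale, so $\ell_{g_n}\to\delta(\pi)\,\ell_q$ pointwise on $\calC_\Gamma$. It therefore suffices to show that $\delta(\pi)\,\ell_q$ is not a pointwise limit of marked length spectra of negatively curved metrics on $\Sigma$; this yields both the qualitative statement (constant sequence) and the quantitative one (by contradiction).

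\emph{Limiting current.} Each $g_n$ carries its Liouville geodesic current $\mathcal{L}_{g_n}$ on $\Sigma$ with $i(\mathcal{L}_{g_n},\alpha)=\ell_{g_n}(\alpha)$, and $q$ carries the quasi-Fuchsian current $\mathcal{L}_q$ obtained by transporting the Bowen--Margulis--Sullivan measure on $\partial^{(2)}\Lambda_\pi$ to $\partial^{(2)}\Gamma$ along the boundary homeomorphism, with $i(\mathcal{L}_q,\alpha)=\ell_q(\alpha)$. Since $\ell_{g_n}\to\delta(\pi)\ell_q$ and any two quasi-Fuchsian (in particular any two hyperbolic) length spectra are bi-Lipschitz, $\ell_{g_n}$ is eventually dominated by a fixed multiple of $\ell_{g_0}$ for a hyperbolic $g_0$, so the numbers $i(\mathcal{L}_{g_n},\mathcal{L}_{g_0})$ stay bounded and $\{\mathcal{L}_{g_n}\}$ is precompact in the space of geodesic currents; a subsequential limit $\mathcal{L}_\infty$ satisfies $i(\mathcal{L}_\infty,\alpha)=\delta(\pi)\,i(\mathcal{L}_q,\alpha)$ for all $\alpha$, hence $\mathcal{L}_\infty=\delta(\pi)\,\mathcal{L}_q$ because a current is determined by its intersections with closed curves. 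As $\pi$ has no parabolics this current is filling, so no degeneration to a measured lamination can occur. The task becomes: show that $\delta(\pi)\,\mathcal{L}_q$ does not lie in the closure of $\{\mathcal{L}_g:g\in\Rmn(\Sigma)\}$.

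\emph{The rigidity input (main obstacle).} Here I would compare with an incompressible minimal surface $S\subset M_\pi=\bfH^3/\pi(\Gamma)$ (it exists by minimal surface theory; a convex-core boundary pleated surface could be used instead). By the Gauss equation its induced metric $g_S$ has $K_{g_S}=-1-|A|^2\le -1$, whence $h_{g_S}\ge 1$, and by Gauss--Bonnet $\mathrm{Area}(\Sigma,g_S)=2\pi|\chi(\Sigma)|-\int_S|A|^2\,dA<2\pi|\chi(\Sigma)|$ because $S$ is not totally geodesic; moreover $\ell_q(\alpha)\le\ell_{g_S}(\alpha)$ for all $\alpha$, and $\widetilde S\hookrightarrow\bfH^3$ is a quasi-isometric embedding, so $\ell_q\le\ell_{g_S}\le C_\pi\,\ell_q$ with $C_\pi>1$. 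The plan is to combine this strict area defect, Katok's entropy--area rigidity $h_g^2\,\mathrm{Area}(\Sigma,g)\ge 2\pi|\chi(\Sigma)|$ (with its equality case), Bonahon-type identities for $i(\mathcal{L}_g,\mathcal{L}_g)$, and the continuity and positivity of self-intersection, so as to isolate a \emph{scale-invariant} inequality satisfied by every $\mathcal{L}_g$ but violated by $\delta(\pi)\mathcal{L}_q$; the delicate point — and the reason one must compare with the minimal surface rather than only with the convex-core pleated surfaces — is that the crude entropy-normalised estimates are "absorbed" by the factor $\delta(\pi)>1$, so one genuinely has to control the \emph{shape} of $\ell_q$, not merely its exponential growth rate. (When $\pi$ is Fuchsian everything is transparent: $\delta(\pi)=1$, $\ell_q$ is the length spectrum of a hyperbolic metric $g_0$, and Otal's and Croke's marked-length-spectrum rigidity forces $g=g_0$, so the only common point is $[g_0]\in i(\Teich(\Sigma))=j(\Teich(\Sigma))$.)
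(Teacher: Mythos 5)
Your reduction and the passage to geodesic currents are plausible in outline (granting that $\dLip{\delta_q}{\delta_{g_n}}\to 0$ gives the uniform two\mbox{-}sided comparison of length spectra needed for precompactness, not merely pointwise convergence), but the proof has a genuine gap exactly where the theorem's content lives: the paragraph labelled ``rigidity input'' is a program, not an argument. You never produce the scale\mbox{-}invariant quantity that is supposed to be satisfied by every Liouville current $\mathcal{L}_g$, $g\in\Rmn(\Sigma)$, and violated by $\delta(\pi)\,\mathcal{L}_q$; you only list tools (minimal surfaces and the Gauss equation, Katok's entropy--area inequality, Bonahon self\mbox{-}intersection) that are all entropy/area\mbox{-}type invariants, and you yourself observe that such normalised estimates are ``absorbed'' by the factor $\delta(\pi)>1$. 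Indeed the critical exponent and the area defect of a minimal or pleated surface constrain only the growth rate of $\ell_q$, whereas membership in the closure of $\{\mathcal{L}_g\}$ is a constraint on the shape of the length function; no inequality separating the two is formulated, let alone proved. As written, the argument establishes nothing beyond the Fuchsian case, which is already Proposition~\ref{P:MLSR}.

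For comparison, the paper closes precisely this gap by a finite, explicit certificate rather than a limiting/rigidity argument. Non\mbox{-}Fuchsianness is detected by an element with non\mbox{-}real trace (Vinberg's lemma, Lemma~\ref{L:complex}); the resulting rotation makes the equivariant boundary map $\phi:\partial\Gamma\to\partial\bfH^3$ spiral around the fixed point, producing four points whose cyclic order is reversed in a round circle of $\partial\bfH^3$ (Proposition~\ref{P:cyclic-order}). Approximating these points by fixed pairs of elements $a,b\in\Gamma$, a Busemann\mbox{-}function computation at the crossing point of the two image geodesics yields a definite defect $\ell_q(a)+\ell_q(b)-\ell_q(ab)>\tfrac{1}{2}\delta>0$, while Theorem~\ref{T:NC-ineq} gives $\ell_g(a)+\ell_g(b)-\ell_g(ab)<0$ for \emph{every} $g\in\Rmn(\Sigma)$, because $(a^-,a^+)$ and $(b^-,b^+)$ are unlinked and aligned in $\partial\Gamma$. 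Comparing the ratios $\ell(ab)/(\ell(a)+\ell(b))$ for this single pair gives $\dLip{\delta_q}{\delta_g}\ge \log\bigl(1+\delta/(2\ell_q(ab))\bigr)$ uniformly in $g$, which is both conclusions of the theorem at once. If you want to salvage your currents approach, the missing inequality would have to encode exactly this linking/alignment positivity of Liouville currents, i.e.\ the sign constraints of Theorem~\ref{T:NC-ineq}, which is the paper's route in disguise; the minimal\mbox{-}surface and entropy inputs you propose do not see it.
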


The following natural question remains open.
\begin{question}
    Is it true that for $g\in\Rmn(\Sigma)\setminus\Teich(\Sigma)$ there exists $\beta_g>0$
    so that
        \[
        \dLip{\delta_q}{\delta_g}\ge \beta_g>0
    \]
    for all  $q\in\QF(\Sigma)$?
\end{question}

\subsection*{Acknowledgement}
This note is dedicated to Benjy Weiss on the occasion of his $80$th birthday.
His profound contributions to Ergodic Theory and Dynamics, 
breadth of his interests and originality of his ideas 
are an inspiration to us and many, many 
others.

We would like to thank Dick Canary for the useful discussion about quasi-fuchsian representations.

A.F. acknowledges the support of the NSF grant DMS-2005493 and BSF grant-2018258.

\section{Length inequalities for negatively curved surfaces}
\label{sec:neg-curv} 

Consider the topological picture first.
Let $\Sigma$ be a closed surface of genus at least two, $\Gamma=\pi_1(\Sigma)$ the corresponding surface group, that acts on the universal cover $\tilde{\Sigma}$ by deck transformations.
This action extends to the action of $\Gamma$ on the boundary circle $\partial\tilde{\Sigma}$,
which is also the Gromov boundary $\partial\Gamma$ of $\Gamma$.
Every $\gamma\ne 1$ in $\Gamma$ has two fixed points on the
topological circle $\partial\tilde{\Sigma}$:
a repelling point $\gamma^-$ and an attracting point $\gamma^+$.
We shall consider a pair $a,b\in\Gamma$ where $a^-$, $a^+$, $b^-$, $b^+$
are four distinct points on the circle.

\medskip

Let $A=(\alpha_1,\alpha_2)$ and $B=(\beta_1,\beta_2)$ be two ordered pairs 
on a circle $C$, where all four points are distinct. 
The action of $\Homeo(C)$ on such pairs has $3$ orbits corresponding to $3$ 
possible relative positions of the two pairs $A$, $B$:
\begin{itemize}
    \item The pairs are \textbf{linked}, meaning that 
    $\beta_1$ and $\beta_2$ lie in distinct arcs defined by 
    $\{\alpha_1,\alpha_2\}$ -- connected components 
    of $C\setminus\{\alpha_1,\alpha_2\}$.
    The relation of being linked is symmetric: $A$ is linked with $B$ iff $B$ is linked with $A$. The order within the pairs $A=(\alpha_1,\alpha_2)$ and $B=(\beta_1,\beta_2)$ does
    not change the status of being linked.
    We say that disjoint pairs $A$ and $B$ are \textbf{unlinked} if they are not linked.
    \item The pairs $A$ and $B$ are \textbf{unlinked and aligned}, 
    if in the arc $\widearc{\alpha_1,\alpha_2}$ determined by $\{\alpha_1,\alpha_2\}$ 
    on $C$ containing $\beta_1$ and $\beta_2$ one has linear order $\alpha_1<\beta_1<\beta_2<\alpha_2$. We note that $A$ is unlinked and aligned with $B$ iff $B$ is unlinked and aligned with $A$. In this case flipping the order in both pairs $A$ and $B$ simultaneously does not change the status of being aligned.
    \item The pairs $A$ and $B$ are \textbf{unlinked and misaligned}, if in the arc $\widearc{\alpha_1,\alpha_2}$ determined by $\{\alpha_1,\alpha_2\}$ on $C$ containing $\beta_1$ and $\beta_2$ one has linear order $\alpha_1<\beta_2<\beta_1<\alpha_2$. 
    We note that $A$ is unlinked and misaligned with $B$ iff $B$ is unlinked and misaligned with $A$. In this case flipping the order in both of $A$ and $B$ simultaneously does not change the status of being misaligned.
    Yet flipping the order in either $A$ or $B$ makes the pair unlinked and aligned.
\end{itemize}

\medskip

\begin{figure}[h]
\centering
\begin{subfigure}[b]{.3\textwidth}
\centering
\begin{tikzpicture}[scale=1.5]
\draw (0,0) circle(1cm);
\filldraw (-60:1cm) circle(.5pt) node[below]{$\alpha_1$}  (120:1cm) circle(.5pt) node[above]{$\alpha_2$};
\filldraw (-120:1cm) circle(.8pt) node[below]{$\beta_1$} (60:1cm) circle(.8pt) node[above]{$\beta_2$};
\end{tikzpicture}
\caption*{linked}
\end{subfigure}
\begin{subfigure}[b]{.3\textwidth}
\centering
\begin{tikzpicture}[scale=1.5]
\draw (0,0) circle(1cm);
\filldraw (-60:1cm) circle(.8pt) node[below]{$\beta_1$} (60:1cm) circle(.8pt) node[above]{$\beta_2$};
\filldraw (-120:1cm) circle(.5pt) node[below]{$\alpha_1$} (120:1cm) circle(.5pt) node[above]{$\alpha_2$};
\end{tikzpicture}
\caption*{unlinked and aligned}
\end{subfigure}
\begin{subfigure}[b]{.3\textwidth}
\centering
\begin{tikzpicture}[scale=1.5]
\draw (0,0) circle(1cm);
\filldraw (-60:1cm) circle(.8pt) node[below]{$\beta_2$} (60:1cm) circle(.8pt) node[above]{$\beta_1$};
\filldraw (-120:1cm) circle(.5pt) node[below]{$\alpha_1$} (120:1cm) circle(.5pt) node[above]{$\alpha_2$};
\end{tikzpicture}
\caption*{unlinked and misaligned}
\end{subfigure}
\end{figure}

%Remove comment command to see Linked vs unlinked with lines connecting the pairs
\begin{comment}
\begin{figure}[h]
\centering
\begin{subfigure}[b]{.3\textwidth}
\centering
\begin{tikzpicture}[scale=1.5]
\draw (0,0) circle(1cm);
\draw (-60:1cm) node[below]{$\alpha_1$} -- (120:1cm) node[above]{$\alpha_2$};
\draw (-120:1cm) node[below]{$\beta_1$} -- (60:1cm) node[above]{$\beta_2$};
\end{tikzpicture}
\caption*{linked}
\end{subfigure}
\begin{subfigure}[b]{.3\textwidth}
\centering
\begin{tikzpicture}[scale=1.5]
\draw (0,0) circle(1cm);
\draw (-60:1cm) node[below]{$\beta_1$} .. controls (.3,0) .. (60:1cm) node[above]{$\beta_2$};
\draw (-120:1cm) node[below]{$\alpha_1$} .. controls(-.3,0) .. (120:1cm)node[above]{$\alpha_2$};
\end{tikzpicture}
\caption*{unlinked and aligned}
\end{subfigure}
\begin{subfigure}[b]{.3\textwidth}
\centering
\begin{tikzpicture}[scale=1.5]
\draw (0,0) circle(1cm);
\draw (-60:1cm) node[below]{$\beta_2$} .. controls (.3,0) .. (60:1cm) node[above]{$\beta_1$};
\draw (-120:1cm) node[below]{$\alpha_1$} .. controls(-.3,0) .. (120:1cm)node[above]{$\alpha_2$};
\end{tikzpicture}
\caption*{unlinked and misaligned}
\end{subfigure}
\end{figure}
\end{comment}

\medskip

Let us now choose a negatively curved Riemannian metric $g$ on $\Sigma$,
and let $\tilde{g}$ be its lift to $\tilde{\Sigma}$.
Denote by $d_{\tilde{g}}$ the corresponding distance on $\tilde{\Sigma}$,
and by $\ell_g:\Gamma\to [0,\infty)$ the associated \textit{stable length}
\[
    \ell_g(\gamma):=\lim \frac{1}{n} d_{\tilde{g}}(\gamma^n.p,p)
\]
where $p\in\tilde{\Sigma}$ is arbitrary. 

\begin{theorem}\label{T:NC-ineq}
    Let $a,b\in\Gamma$ be non-trivial elements with distinct fixed points 
    $a-_,a^+,b^-, b^+$ on the boundary circle $\partial\Gamma$.
    Then 
    \begin{enumerate}
    \item If $(a^-,a^+)$ and $(b^-,b^+)$ are linked, then 
        \[
            \ell_g(ab)<\ell_g(a)+\ell_g(b).
        \]
     \item If $(a^-,a^+)$ and $(b^-,b^+)$ are unlinked and aligned, then 
        \[
            \ell_g(ab)>\ell_g(a)+\ell_g(b).
        \]
     \item If $(a^-,a^+)$ and $(b^-,b^+)$ are unlinked and misaligned, then 
        \[
            \ell_g(a^{-1}b)>\ell_g(a)+\ell_g(b).
        \]
    \end{enumerate}
\end{theorem}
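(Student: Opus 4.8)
\textbf{Proof strategy for Theorem~\ref{T:NC-ineq}.}
The plan is to realize $\ell_g(a)$, $\ell_g(b)$ and $\ell_g(ab)$ (resp.\ $\ell_g(a^{-1}b)$) as the $d_{\tilde g}$-lengths of the closed $g$-geodesics in the free homotopy classes of $a$, $b$ and $ab$ on $\Sigma$, equivalently as the translation lengths of these isometries of $(\tilde\Sigma,d_{\tilde g})$ along their axes $\Ax{a}$, $\Ax{b}$, $\Ax{ab}$ in the CAT($-1$) space $\tilde\Sigma$. Each axis $\Ax{\gamma}$ is the unique $g$-geodesic line in $\tilde\Sigma$ joining $\gamma^-$ to $\gamma^+$, and $\gamma$ translates it by exactly $\ell_g(\gamma)$. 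The combinatorial hypotheses on the four boundary points then translate into statements about how $\Ax{a}$ and $\Ax{b}$ sit relative to one another: in the linked case they cross transversally in a single point (two distinct geodesics in a CAT($-1$) space with interleaved endpoints meet exactly once), while in the unlinked cases they are disjoint, and the sub-cases aligned/misaligned record whether $a$ and $b$ translate ``in the same direction'' or ``in opposite directions'' along the shortest path between the two axes.

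First I would set up the linked case~(1). Pick the point $x$ where $\Ax{a}$ meets $\Ax{b}$. Then $a.x\in\Ax{a}$ with $d_{\tilde g}(x,a.x)=\ell_g(a)$, and $b.x\in\Ax{b}$ with $d_{\tilde g}(x,b.x)=\ell_g(b)$; since the two axes are distinct geodesics through $x$, the concatenation $[b.x,x]\cup[x,a.x]$ is not geodesic, so by strict convexity of $d_{\tilde g}$ on a CAT($-1$) space
\[
    d_{\tilde g}(b.x,a.x) < d_{\tilde g}(b.x,x)+d_{\tilde g}(x,a.x)=\ell_g(a)+\ell_g(b).
\]
Now apply $b$ to the segment $[x,a.x]$: $b.x$ is carried to $ba.x$, but I actually want $ab$, so I would instead run the argument with the point $b.x$ playing the role of base point for $a$, or—cleaner—observe $\ell_g(ab)=\ell_g(ba)\le d_{\tilde g}(y,ba.y)$ for \emph{every} $y$, take $y=x$, and note $ba.x$: we have $d_{\tilde g}(x,ba.x)\le d_{\tilde g}(x, a.x)+d_{\tilde g}(a.x,ba.x)=d_{\tilde g}(x,a.x)+d_{\tilde g}(a^{-1}x,b.x)$, and since $a^{-1}x\in\Ax{a}$ lies on the opposite side of $x$ from $a.x$ while $b.x\in\Ax{b}$, the path $[a^{-1}x,x]\cup[x,b.x]$ again fails to be geodesic, giving a strict inequality that sums to $\ell_g(a)+\ell_g(b)$. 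The strictness is robust because the axes are distinct lines through $x$, so the concatenation genuinely turns a corner.

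For the unlinked cases the axes $\Ax{a}$ and $\Ax{b}$ are disjoint, so there is a unique common perpendicular geodesic segment $[p,q]$ realizing $\dist(\Ax{a},\Ax{b})$, with $p\in\Ax{a}$, $q\in\Ax{b}$. I would orient $\Ax{a}$ by the direction of translation of $a$ (from $a^-$ to $a^+$) and similarly orient $\Ax{b}$. In the aligned case, walking along the boundary circle one meets $a^-, b^-, b^+, a^+$ (up to the symmetric relabelings allowed by the definition), which forces the translation of $b$ along $\Ax{b}$ to move the foot $q$ \emph{away} from $p$ on the same ``side'' as the translation of $a$ moves $p$; I would then estimate $d_{\tilde g}(x, ab.x)$ for $x=p$ by the concatenation $[p,a^{-1}p]^{\mathrm{rev}}$—more precisely use $ab.p$ and the triangle path through $a.p$ and through $q$—and show that in a CAT($-1$) space the broken path from $p$ to $ab.p$ that crosses the bridge $[p,q]$ has length \emph{strictly less} than the straight geodesic of the same homotopy type only when the directions disagree, so here one gets $\ell_g(ab)\ge$ (length of a path that already exceeds $\ell_g(a)+\ell_g(b)$ by at least twice the bridge length $\dist(\Ax a,\Ax b)>0$), yielding the strict inequality. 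In the misaligned case the orientations are reversed, so replacing $a$ by $a^{-1}$ flips $\Ax a$ and converts misaligned-for-$(a,b)$ into aligned-for-$(a^{-1},b)$; then case~(2) applied to $a^{-1}, b$ gives $\ell_g(a^{-1}b)>\ell_g(a^{-1})+\ell_g(b)=\ell_g(a)+\ell_g(b)$, using $\ell_g(a^{-1})=\ell_g(a)$.

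The main obstacle is the unlinked \emph{aligned} inequality (2): unlike the linked case, here we need a genuine \emph{lower} bound on the translation length of $ab$, and translation length is an infimum, so a single clever path only gives an upper bound in the wrong direction. The fix is to bound $\ell_g(ab)$ below by working on the axis of $ab$ itself, or equivalently to use that $\ell_g(ab)=\lim_n \tfrac1n d_{\tilde g}(p,(ab)^n p)$ and to control the orbit $p, ab.p, (ab)^2 p,\dots$ by a ``ping-pong along two disjoint axes'' argument: each application of $b$ then $a$ pushes the point a definite distance past where it would be if the two axes coincided, and because the axes are disjoint and their endpoints unlinked-and-aligned, these displacements accumulate without cancellation, so $\tfrac1n d_{\tilde g}(p,(ab)^n p)\ge \ell_g(a)+\ell_g(b)+2\dist(\Ax a,\Ax b)-o(1)$. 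Making this quantitative—i.e.\ showing the bridge contributions genuinely add rather than telescope away—is where the CAT($-1$) (as opposed to merely CAT($0$)) hypothesis, via strict convexity and thinness of triangles, does the real work; I would isolate it as a lemma about two hyperbolic isometries with disjoint, coherently-oriented axes.
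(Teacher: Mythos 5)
Your case (1) is essentially the paper's argument (intersection point of the two axes, triangle inequality, strictness from non-collinearity), except for one incorrect identity: $d_{\tilde g}(a.x,ba.x)=d_{\tilde g}(a^{-1}x,b.x)$ is false in general (the right-hand side equals $d_{\tilde g}(x,ab.x)$, while the left-hand side is the displacement of $b$ at $a.x$). The estimate you want is recovered by splitting at $b.x$ instead: $d_{\tilde g}(x,ba.x)\le d_{\tilde g}(x,b.x)+d_{\tilde g}(b.x,ba.x)=d_{\tilde g}(x,b.x)+d_{\tilde g}(x,a.x)$, strict because $x,\,b.x,\,ba.x$ are not collinear; this is a repairable slip. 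Your reduction of (3) to (2) via $a\mapsto a^{-1}$ is exactly the paper's.

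The genuine gap is case (2). You correctly diagnose that a path only bounds $\ell_g(ab)$ from above and that a lower bound is what is needed, but your fix is deferred to an unproved ``lemma about two hyperbolic isometries with disjoint, coherently-oriented axes,'' and the quantitative form you assert, $\tfrac1n d_{\tilde g}(p,(ab)^np)\ge \ell_g(a)+\ell_g(b)+2\dist(\Ax{a},\Ax{b})-o(1)$, is actually false: the left side tends to $\ell_g(ab)$, and already in $\bfH^2$ one has $\ell(ab)<\ell(a)+\ell(b)+2\dist(\Ax{a},\Ax{b})$ for disjoint axes (write $a=\rho_{L_1}\rho_M$, $b=\rho_M\rho_{L_2}$ with $M$ the line of the common perpendicular and $L_1\perp\Ax{a}$, $L_2\perp\Ax{b}$; then $\ell(ab)=2\dist(L_1,L_2)$, which is strictly less than the length of the right-angled broken path through the bridge). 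The ``bridge contributions add without cancellation'' heuristic is a tree identity and fails in CAT($-1$), so the whole burden of (2) --- the inequality $\ell_g(ab)>\ell_g(a)+\ell_g(b)$ --- remains unproved in your proposal. The paper closes it with a softer device that your phrase ``work on the axis of $ab$ itself'' gestures at but does not execute: using the circular order in the aligned case one shows that $b.(ab)^{+}$ lies strictly between $b^{+}$ and $(ab)^{+}$ and $b.(ab)^{-}$ strictly between $a^{-}$ and $(ab)^{-}$, so the geodesic $b.\Ax{ab}$ \emph{crosses} $\Ax{ab}$; taking $p$ the crossing point and $x=b^{-1}.p$, the three points $x$, $b.x=p$, $ab.x=a.p$ lie on $\Ax{ab}$ in this order, whence $\ell_g(ab)=d_{\tilde g}(x,b.x)+d_{\tilde g}(p,a.p)>\ell_g(b)+\ell_g(a)$, the strictness coming from $x\notin\Ax{b}$ and $p\notin\Ax{a}$. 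Supplying this crossing argument (or a correct substitute for your lemma) is what is missing; as written, (2) and hence (3) are not proved.
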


\begin{proof}
    First recall that in the case of negatively curved manifolds, such as $(\Sigma,g)$ the stable length $\ell_g(\gamma)$ 
can also be defined as the \emph{minimal translation length}
\[
    \ell_g(\gamma)=\inf_{p\in\tilde\Sigma} d_{\tilde{g}}(\gamma.p,p).
\]
Moreover, when $\ell_g(\gamma)>0$, which is the case of any non-trivial $\gamma\ne 1$, the $\inf$ is attained
and the set 
\[
    \Ax{\gamma}:=\setdef{p\in \tilde\Sigma}{d_{\tilde{g}}(\gamma.p,p)=\ell_g(\gamma)}
\]
is the geodesic line $(\gamma^-,\gamma^+)$ in $\tilde\Sigma$. 
It is called the \textit{axis} of $\gamma$.

\medskip

Elementary topology of the disc $\tilde\Sigma$ implies that when $(a^-,a^+)$ and $(b^-,b^+)$
are linked, the axes $\Ax{a}$ and $\Ax{b}$ must intersect in $\tilde\Sigma$. 
Due to negative curvature the intersection is a singleton: $\Ax{a}\cap\Ax{b}=\{p\}$. 
Since $p\in\Ax{b}$, we have $x=b^{-1}.p\in \Ax{b}$. 
Similarly, we have $p$ and $y=a.p$ are in $\Ax{a}$ as well. 
To prove part $(1)$ we use the triangle inequality to obtain for $x=b^{-1}.p$: 
\[
    \begin{split}
      \ell_g(ab) \leq d_{\tilde g}(x,ab.x) &< d_{\tilde g}(x,b.x) + d_{\tilde g}(b.x,ab.x)\\
        &=d_{\tilde g}(b^{-1}.p,p)+d_{\tilde g}(p,a.p)
        = \ell_g(b) + \ell_g(a)
    \end{split}
\]
We observe that the second inequality is strict and will sharpen it 
in the proof of Theorem~\ref{T:main} below.  

\begin{figure}[h]
\centering
\begin{subfigure}[b]{.45\textwidth}
\begin{tikzpicture}[scale=2.5]
\draw (0,0) circle(1cm);
\draw (-60:1cm) node[below]{$a^-$} -- (120:1cm) node[above]{$a^+$};
\draw (-120:1cm) node[below]{$b^-$} -- (60:1cm) node[above]{$b^+$};
\filldraw (0,0) circle(.5pt) node[right]{$p$};
\filldraw (intersection of -120:1cm--60:1cm and -100:1cm--100:1cm) circle(.5pt) node[left]{$x=b^{-1}.p$ \hspace{1em}};
\filldraw (intersection of -60:1cm--120:1cm and -105:1cm--105:1cm) circle(.5pt) node[above]{$y=ab.x$};
\end{tikzpicture}
\caption*{linked}
\end{subfigure}
\begin{subfigure}[b]{.45\textwidth}
\begin{tikzpicture}[scale=2.5]
\draw (0,0) circle(1cm);
\draw (-30:1cm) node[below right]{$b^-$} .. controls (.7,0) .. (30:1cm) node[above right]{$b^+$};
\draw (-140:1cm) node[below left]{$a^-$} .. controls(-.5,0) .. (140:1cm)node[above left]{$a^+$};
\draw (0,-1) node[below]{$(ab)^-$} -- (0,1) node[above]{$(ab)^+$};
\draw[dashed] (-120:1cm) node[below left]{\hspace{.5em} $b.(ab)^-$} -- (60:1cm) node[above right]{$b.(ab)^+$};
\filldraw (intersection of -120:1cm--60:1cm and 0,-1--0,1) circle(.5pt) node[right]{$p$};
\filldraw (0,-.2) circle(.5pt) node[right]{$x=b^{-1}.p$};
\filldraw (0,.4) circle(.5pt) node[left]{$ab.x$};
\end{tikzpicture}
\caption*{unlinked and aligned}
\end{subfigure}
\end{figure}

In the case where the pairs $(a^-,a^+)$ and $(b^-,b^+)$ are unlinked and aligned, 
we remind ourselves of the definition, that $a^-,a^+$ define an arc $\widearc{a^-a^+}$ 
on the boundary circle containing both $b^-$ and $b^+$, which can be equipped
with a linear order (anti-clockwise in the figure) so that 
\[
    a^- < b^- < b^+ < a^+.
\]
The action of $b$ on the arc/interval from $b^+$ to $a^+$ is decreasing towards the fixed point $b^+$, while the action of $a$ is increasing towards $a^+$.
Thus $ab$ maps this interval into itself, and therefore the attracting point $(ab)^+$ 
satisfies $b^+<(ab)^+<a^+$. Moreover, we have
\[
    b^+=b.b^+<b.(ab)^+<(ab)^+.
\]
Since the repelling fixed point of an element is the attracting fixed point 
of it's inverse, the same argument gives 
$a^-<(ab)^-<b^-$. 
We claim that $a^-<b.(ab)^-<(ab)^-$.
Indeed, in the linear order on the arc $\widearc{b^+b^-}$ that contains $a^\pm$ so that
$b^+<a^+,a^-<b^-$ the map $b$ is decreasing, and thus $\xi=b.(ab)^-<(ab)^-$.
Since $a.\xi=(ab).(ab)^-=(ab)^->\xi$ we deduce that $a^-<\xi<(ab)^-$.
Hence
\[
    a^-<b.(ab)^-<(ab)^-.
\]
We conclude that pair $\left((ab)^-,(ab)^+\right)$ is linked with its image under $b$.
Denote by $p$ the intersection of $\Ax{ab}$ and $b.\Ax{ab}$ in $\tilde\Sigma$, and
let $x=b^{-1}.p$. Since $p\in b.\Ax{ab}$ we have $x\in \Ax{ab}$ and $ab.x \in\Ax{ab}$ as well. 
Thus the points $x$, $p=b.x$, $ab.x=a.p$ lie on the geodesic line $\Ax{ab}$, 
%connecting $(ab)^-$ to $(ab)^+$,
and in fact in this linear order.
This can be seen by inspecting the projections of these points to $\Ax{a}$ and $\Ax{b}$,
making use of the assumption that the pairs are aligned.
Hence
\[
    \ell_g(ab)=d_{\tilde g}(x,ab.x)=d_{\tilde g}(x,b.x)+d_{\tilde g}(p,a.p)
    >\ell_g(b)+\ell_g(a).
\]
The strict inequality here occurs because $p\not\in\Ax{a}$ and $x\not\in\Ax{b}$.
This proves statement (2).

Statement (3) follows from (2) by replacing $a$ by $a^{-1}$.
This completes the proof of Theorem~\ref{T:NC-ineq}.
\end{proof}

\section{Spiraling of the boundary of a quasi-fuchsian embedding}

Let $\Gamma=\pi_1(\Sigma)$ be a surface group, and $q\in \QF(\Sigma)$ 
be defined by a representation $\pi:\Gamma\to\PSL_2(\bbC)$.
For $\gamma\in\Gamma$ the element $g=\pi(\gamma)\in\PSL_2(\bbC)$ 
has two preimages $\pm\hat{g}$ in $\SL_2(\bbC)$.
Since the traces $\pm\tr(\hat{g})$ are invariant under conjugation,
we can denote them by $\pm\tr_q(\gamma)$.
The following is a particular case of a lemma of Vinberg \cite{Vinberg}
(see \cite{MR}*{Corollary 3.2.5}) 
\begin{lemma}\label{L:complex}
    Let $\Gamma=\pi_1(\Sigma)$ a surface group, and 
    $q\in\QF(\Sigma)\setminus \Teich(\Sigma)$.
    Then there exists $\gamma\in\Gamma$ with
    $\ \pm\tr_q(\gamma)\in\bbC\setminus \bbR$.
\end{lemma}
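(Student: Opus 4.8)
The statement is a quick consequence of the cited result, so the plan is essentially to set up the dichotomy that result provides and discard the compact branch. I would argue by contradiction, assuming $\pm\tr_q(\gamma)\in\bbR$ for every $\gamma\in\Gamma$. Rather than choose a lift of $\pi$ to $\SL_2(\bbC)$, it is cleaner to pass to the full preimage $H\le\SL_2(\bbC)$ of $\pi(\Gamma)$ under $\SL_2(\bbC)\to\PSL_2(\bbC)$: the assumption then says exactly that every element of $H$ has real trace.

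Next I would verify the hypotheses of \cite{MR}*{Corollary 3.2.5} (the case of Vinberg's lemma with trace field contained in $\bbR$). Since $\Sigma$ has genus at least two, $\Gamma$ is non-elementary; since $q\in\QF(\Sigma)$, the representation $\pi$ is faithful with discrete image, so $\pi(\Gamma)$ is a non-elementary subgroup of $\PSL_2(\bbC)$, and hence its full preimage $H$ is non-elementary in $\SL_2(\bbC)$ (it surjects onto $\pi(\Gamma)$ with kernel $\{\pm I\}$). The cited result then gives that $H$ is conjugate in $\GL_2(\bbC)$ either into $\SL_2(\bbR)$ or into $\operatorname{SU}(2)$, according to whether the quaternion algebra spanned by $H$ over its (real) trace field becomes $M_2(\bbR)$ or Hamilton's quaternions $\HH$ after base change to $\bbR$.

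It remains to rule out the spherical case and identify the other one. A conjugate of $\operatorname{SU}(2)$ consists only of elliptic elements, whereas a non-elementary subgroup of $\SL_2(\bbC)$ always contains loxodromic elements (indeed $\pi(\Gamma)$, being quasi-Fuchsian, is purely loxodromic), so $H$ is not conjugate into $\operatorname{SU}(2)$. Hence $\pi(\Gamma)$ is conjugate into $\PSL_2(\bbR)$ — a priori into $\PGL_2(\bbR)$, but the quasi-Fuchsian surface group carries a compatible orientation, so one lands in $\PSL_2(\bbR)$ — and it remains discrete and faithful there. A discrete faithful representation of the closed surface group $\Gamma$ into $\PSL_2(\bbR)$ is a cocompact Fuchsian one (the limit set, a quasicircle inside the round circle $\partial\bfH^2\subset\partial\bfH^3$, must be all of that circle, so $\pi(\Gamma)$ acts cocompactly on the totally geodesic $\bfH^2$ it spans); by the description of $\Teich(\Sigma)$ recalled in the introduction this means $q\in\Teich(\Sigma)$, contradicting $q\in\QF(\Sigma)\setminus\Teich(\Sigma)$. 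The contradiction produces the desired $\gamma$.

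There is no genuine obstacle once \cite{MR}*{Corollary 3.2.5} — hence, ultimately, Vinberg's theorem, whose proof runs through the fact that the $\bbQ(\tr)$-linear span of a non-elementary subgroup of $\SL_2(\bbC)$ is a quaternion algebra over its trace field, together with the classification of quaternion $\bbR$-algebras — is taken as given. The only points requiring care are the bookkeeping between $\SL_2$ and $\PSL_2$, which I would handle by working with the preimage $H$ rather than a chosen lift of $\pi$, and the elimination of the case $\operatorname{SU}(2)$, which is immediate from discreteness together with the presence of loxodromic elements.
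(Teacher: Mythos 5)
Your proof is correct and takes essentially the same route as the paper, which derives the lemma directly from Vinberg's result as cited in \cite{MR}*{Corollary 3.2.5}. Your write-up merely fills in the routine details of that reduction: real traces force conjugacy into $\SL_2(\bbR)$ or $\operatorname{SU}(2)$, the compact case is excluded by the presence of loxodromic elements, and the Fuchsian alternative contradicts $q\in\QF(\Sigma)\setminus\Teich(\Sigma)$.
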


Let $\pi:\Gamma\to \PSL_2(\bbC)$ be a quasi-Fuchsian representation. 
There exists a $\Gamma$-equivariant
continuous map  
\[
    \phi:\partial\Gamma \overto{} \mathbb{P}_\bbC^1,\qquad \phi\circ \gamma=\pi(\gamma)\circ \phi
\]
that is a homeomorphism between the topological circle $\partial\Gamma$ 
and the Jordan curve on the sphere $\mathbb{P}_\bbC^1$
formed by the limit set $L_{\pi(\Gamma)}$ of $\pi(\Gamma)$. 

\begin{prop}\label{P:cyclic-order}
    Let $q\in\QF(\Sigma)\setminus \Teich(\Sigma)$ be given by
    a quasi-Fuchsian representation $\pi:\Gamma\overto{}\PSL_2(\bbC)$.
    Then there exists an isometrically embedded hyperbolic plane 
    $\bfH^2\subset \bfH^3$ and a sequence $\xi_1,\xi_2,\dots\to \xi_* \in \partial \Gamma$ whose cyclic order with respect to the circle $\partial\Gamma$ is 
    \[
        \xi_1,\ \xi_2,\ \xi_3,\ \xi_4,\ \dots,\ \xi_*
    \]
    and whose images $\phi(\xi_n)\in \mathbb{P}_\bbC^1$ lie on the boundary circle $\partial\bfH^2$ in the following cyclic order:
    \[
        \phi(\xi_1),\ \phi(\xi_3),\ \phi(\xi_5),\ \dots ,\ \phi(\xi_*),\ 
        \dots\ \phi(\xi_6),\  \phi(\xi_4),\ \phi(\xi_2).
    \]
    In particular, we have 
    \begin{itemize}
        \item 
        $(\xi_1, \xi_4)$ and $(\xi_2,\xi_3)$ are unlinked and aligned in $\partial\Gamma$,
        while $(\phi(\xi_1), \phi(\xi_4))$ and $(\phi(\xi_2),\phi(\xi_3))$ are linked in $\partial \bfH^2$.
        \item 
        $(\xi_1, \xi_3)$ and $(\xi_2,\xi_4)$ are linked in $\partial\Gamma$,
        while $(\phi(\xi_1), \phi(\xi_3))$ and $(\phi(\xi_2),\phi(\xi_4))$ are unlinked and aligned in $\partial \bfH^2$.
    \end{itemize}
\end{prop}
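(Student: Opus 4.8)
The plan is to use that a quasi-Fuchsian-but-not-Fuchsian representation contains a loxodromic element with a nontrivial rotational part, and that the limit curve therefore \emph{spirals} into its fixed points. Concretely: Lemma~\ref{L:complex} furnishes $\gamma\in\Gamma$ with $\pm\tr_q(\gamma)\notin\bbR$, so $g:=\pi(\gamma)$ is loxodromic with complex translation length $\ell+i\theta$ where $\ell>0$ and $\theta\notin 2\pi\bbZ$. I would conjugate in $\PSL_2(\bbC)$ so that the repelling and attracting fixed points of $g$ on $\mathbb{P}_\bbC^1=\partial\bfH^3$ are $0$ and $\infty$ and $g$ acts by $z\mapsto\lambda z$, $\lambda=e^{\ell+i\theta}$; thus $\phi(\gamma^{-})=0$ and $\phi(\gamma^{+})=\infty$. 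For the plane I take $\bfH^2\subset\bfH^3$ to be the totally geodesic plane containing the axis of $g$ with $\partial\bfH^2=K:=\bbR\cup\{\infty\}$ (the vertical half-plane over the real axis), so $K$ passes through $\phi(\gamma^{-})=0$, and I set $\xi_*:=\gamma^{-}$.

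The core of the argument is the spiraling of the limit set $L:=\phi(\partial\Gamma)=L_{\pi(\Gamma)}$ near $0$. Since $g$ fixes $0$ and $\infty$, it preserves each of the two arcs of the Jordan curve $L$ from $0$ to $\infty$; I fix one such arc $L_+$ and choose a fundamental-domain parametrization $\sigma\colon[0,\infty)\to L_+$, i.e.\ a homeomorphism onto its image with $\sigma(s)\to 0$ as $s\to\infty$ and $\lambda^{-1}\sigma(s)=\sigma(s+1)$ (obtained by subdividing $L_+$ along the orbit $g^{-n}w_0\to 0$ of a basepoint $w_0\in L_+\setminus\{0,\infty\}$; each arc $[\,g^{-n-1}w_0,g^{-n}w_0\,]$ is $g^{-n}$ applied to the fundamental arc). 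A continuous branch $a(s)$ of $\arg\sigma(s)$ then satisfies $a(s+1)=a(s)-\theta'$ for a fixed real $\theta'\equiv\theta\ (\mathrm{mod}\ 2\pi)$ with $\theta'\neq 0$; passing to the other arc if necessary I may assume $\theta'>0$, whence $a(s)\to-\infty$ as $s\to\infty$. This is the precise meaning of ``$L$ spirals into $\gamma^{-}$'': the arc $L_+$ winds infinitely often around $0$.

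From here I extract the desired points. Pick an integer $m_0$ with $\pi m_0\le a(0)$, and for $k\ge 1$ let $s_k\ge 0$ be minimal with $a(s_k)=\pi(m_0-k+1)$; continuity of $a$ together with $a(0)\ge\pi m_0$ and $a(s)\to-\infty$ gives $s_1<s_2<\cdots\to\infty$, hence $\sigma(s_k)\to 0$ and $|\sigma(s_k)|\to 0$. Since $\arg\sigma(s_k)\equiv\pi(m_0-k+1)\ (\mathrm{mod}\ 2\pi)$, each $\sigma(s_k)$ lies in $K\setminus\{0,\infty\}$, on the ray $\bbR_{>0}$ or $\bbR_{<0}$ according to the parity of $k$. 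A greedy selection then yields a subsequence $s_{k_1}<s_{k_2}<\cdots$ whose indices $k_j$ alternate in parity and along which $|\sigma(s_{k_j})|$ is strictly decreasing (at each step infinitely many later indices of the required parity have smaller modulus, because $|\sigma(s_k)|\to 0$). Put $\xi_j:=\phi^{-1}(\sigma(s_{k_j}))$ and $\xi_*:=\gamma^{-}$. Then $\xi_j\to\xi_*$; monotonicity of $\phi^{-1}\circ\sigma$ along the arc $\phi^{-1}(L_+)$ of $\partial\Gamma$ (which limits onto $\gamma^{-}$) gives the cyclic order $\xi_1,\xi_2,\xi_3,\dots,\xi_*$ on $\partial\Gamma$, while the parity and the decreasing moduli of the points $\sigma(s_{k_j})\in K$ give, reading $K$ from the far end of the ray carrying the odd-indexed points towards $0$ and on through $0$ to the far end of the other ray, exactly $\phi(\xi_1),\phi(\xi_3),\phi(\xi_5),\dots,\phi(\xi_*),\dots,\phi(\xi_6),\phi(\xi_4),\phi(\xi_2)$ on $\partial\bfH^2$. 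The two bulleted consequences are then immediate: among the first four points the $\partial\bfH^2$-order is $\phi(\xi_1),\phi(\xi_3),\phi(\xi_4),\phi(\xi_2)$, so $\{\phi(\xi_1),\phi(\xi_4)\}$ separates $\phi(\xi_2)$ from $\phi(\xi_3)$ (linked) while on $\partial\Gamma$ one has $\xi_1<\xi_2<\xi_3<\xi_4$ (unlinked and aligned), and symmetrically for the pair $(\xi_1,\xi_3),(\xi_2,\xi_4)$.

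The main obstacle is making the spiraling rigorous and clean — establishing $a(s)\to-\infty$ through the fundamental-domain parametrization and then doing the parity/modulus bookkeeping so that the extracted subsequence realizes \emph{exactly} the prescribed interleaved cyclic orders rather than some other pattern. The underlying geometric fact, that a loxodromic element with nonzero rotation forces its invariant Jordan limit curve to spiral at the fixed points, is classical, but packaging it in the form needed here (and verifying that $\sigma(s_k)$ genuinely lands on the fixed circle $K$, with the two sides correctly distinguished) is where the care is required.
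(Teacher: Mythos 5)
Your proposal is correct and takes essentially the same route as the paper: both normalize the loxodromic $\pi(\gamma)$ supplied by Lemma~\ref{L:complex} so its fixed points are $0,\infty$, lift the argument of $\phi$ along the $\gamma$-invariant arc using equivariance to see it is unbounded (spiraling), and then select points whose images land alternately on the two real rays with monotone modulus, taking $\bfH^2$ to be the plane over $\bbR\cup\{\infty\}$. The only cosmetic differences are that the paper spirals out toward the attracting point (so $\xi_*=\gamma^+$, $\phi(\xi_*)=\infty$, moduli increasing, with monotonicity forced by spacing the argument windows via the constants $R_0,r_0$), while you spiral into $\gamma^-$ and obtain monotone moduli by a greedy subsequence extraction.
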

\begin{proof}
    Fix an element $\gamma\in\Gamma$ with $\pm\tr_q(\gamma)\in\bbC\setminus \bbR$ 
    as in Lemma~\ref{L:complex}.
    Note that $\gamma$ must be hyperbolic, and 
    denote by $\xi_*$ the attracting point $\gamma^+\in\partial\Gamma$.
    At the same time $\pi(\gamma)\in\PSL_2(\bbC)$ is loxodromic with
    an attracting point $\phi(\gamma^+)$.
    Identifying $\mathbb{P}_\bbC^1$ with $\bbC\cup\{\infty\}$ and replacing 
    $\pi:\Gamma\overto{}\PSL_2(\bbC)$ by an appropriate conjugate we may assume
    $\phi(\gamma^+)=\infty$ and $\phi(\gamma^-)=0$.
    Then the action of $\pi(\gamma)$ on $\bbC$ is given by the linear map
    \[
        z\mapsto (\lambda e^{2\pi i\theta})\cdot z
        \qquad\textrm{with}\qquad
        \lambda>1,\qquad \theta\in\bbR\setminus \bbZ.
    \]
    Identify $\partial\Gamma\setminus \{\gamma^+\}$ with $\bbR$ so
    that $\gamma^-$ corresponds to $0\in\bbR$.
    With a slight abuse of notation we write $\gamma$ and $\phi$ 
    for the corresponding homeomorphism of $\bbR$, 
    and an equivariant injective continuous map $\bbR\overto{}\bbC$.
    Note that $\gamma(0)=0$, $\gamma$ is strictly increasing on $[0,\infty)$ 
    (and strictly decreasing on $(-\infty,0]$), while $\phi$ satisfies
    \begin{equation}\label{e:phi-lambda-theta}
        \phi(\gamma (t))=\lambda e^{2\pi i\theta}\cdot \phi(t)
    \end{equation}
    and $|\phi(t)|\to\infty$ as $|t|\to\infty$. 
    Since $\phi(t)\ne 0$ for all $t\in(0,\infty)$ there exist
    continuous functions $r:(0,\infty)\to (0,\infty)$ and $s:(0,\infty)\to\bbR$ 
    so that 
    \[
        \phi(t)=r(t)\cdot e^{2\pi i \cdot s(t)} \qquad(t>0).
    \]
    Thus (\ref{e:phi-lambda-theta}) implies that 
    \[
        r(\gamma^n(t))=\lambda^n\cdot r(t),
        \qquad
        s(\gamma^n(t))=s(t)+n\Theta
    \]
    where $\Theta\in \theta+\bbZ$.
    Note that the assumption that 
    $q\in\QF(\Sigma)\setminus\Teich(\Sigma)$ gives $\theta\not\in\bbZ$ (Lemma~\ref{L:complex}),
    implying $\Theta\ne 0$. 
    
    Fix $t_0>0$ and use points $t_n=\gamma^n(t_0)$, $n\in\bbZ$, 
    to partition the ray $(0,\infty)$.
    Let 
    \[
        R_0=\max\setdef{r(t)}{ 0\le t\le t_0},
        \qquad
        r_0=\min\setdef{r(t)}{ t_0\le t<\infty}.
    \]
    Then $|\phi(t)|=r(t)\le \lambda^n\cdot R_0$ for all $t\in [0,t_n]$,
    and $|\phi(t)|=r(t)\ge \lambda^n\cdot r_0$ for all $t\ge t_n$.
    We can now choose integers $n(1)<m(1)<n(2)<m(2)<\dots$
    so that 
    \[
        (m(k)-n(k))\cdot |\Theta|>1,\qquad \lambda^{n(k+1)-m(k)}>R_0/r_0
    \]
    for all $k\in\bbN$. 
    The first condition guarantees that  $s(t_{m(k)})>s(t_{n(k)})+1$ and therefore
    there exist 
    \[
        \xi_k\in [t_{n(k)},t_{m(k)}]
        \qquad\textrm{with}\qquad
        e^{2\pi i \cdot s(\xi_k)}=(-1)^k.
    \]
    Thus $\phi(\xi_k)=(-1)^k r(\xi_k)$ lie on the real line $\bbR\subset\bbC$ on both sides of $0\in\bbR$ in alternating order.
    Since $\xi_k\le t_{m(k)}<t_{n(k+1)}\le \xi_{k+1}$ we also have
    \[
        |\phi(\xi_k)|=r(\xi_k)\le \lambda^{m(k)}\cdot R_0 
        < \lambda^{n(k+1)}\cdot r_0\le r(\xi_{k+1})=|\phi(\xi_{k+1})|.
    \]
    Thus the sequence $\{|\phi(\xi_k)|\}$ is monotonically increasing.
    In particular, we have 
    \[
        \dots \phi(\xi_5)<\phi(\xi_3)<\phi(\xi_1)<0< \phi(\xi_2)<\phi(\xi_4)<\phi(\xi_6)<\dots 
    \]
    on $\bbR\subset \bbC$.  Recalling that $\phi(\xi_*)=\infty$ we get the
    required cyclic  order.
\end{proof}

\section{Proof of Theorem~\ref{T:main}}

Let us first recall two general well known facts, 
one related to CAT(-1) spaces $(X,d_X)$, and 
another to Gromov hyperbolic groups $\Gamma$ acting on their boundary $\partial\Gamma$. 
We will apply them to $X=\bfH^3$ and to the surface group $\Gamma=\pi_1(\Sigma)$.

Recall that given a point $p\in\bfH^3$ and a pair of distinct boundary points 
$\xi\ne \eta\in \partial\bfH^3$ the Busemann function is given by the limit 
\[
    \Bus_p(\xi,\eta)=\lim_{x\to\xi,\ y\to\eta}
    \left(d_{\bfH^3}(p,x)+d_{\bfH^3}(p,y)-d_{\bfH^3}(x,y)\right).
\]
% The limit exists, and the function $\Bus_p(\xi,\eta)$ is 
% continuous in $p$, $\xi$ and $\eta$. 
Triangle inequality implies that $\Bus_p(\xi,\eta)\ge 0$. 
Crucial for our purposes, is the fact that
the strict inequality occurs unless $p$ lies on the geodesic line $(\xi,\eta)$: 
\[
    \Bus_p(\xi,\eta)>0\qquad \Longleftrightarrow\qquad p\not\in(\xi,\eta).
\]
The second fact is a consequence of the topological transitivity of the geodesic flow
on the unit tangent bundle to the surface. It can be used to show that for 
any $\xi\ne\eta$ in $\partial\Gamma$ there exists
an infinite sequence $\{\gamma_n\}$ in $\Gamma$ so that 
\[
    \xi=\lim_{n\to\infty}\gamma_{n}^+,\qquad \eta=\lim_{n\to\infty}\gamma_{n}^-
\]
where $\gamma_{n}^-,\gamma_n^+\in\partial\Gamma$ denote the repelling 
and the attracting points of $\gamma_n\in\Gamma$.

\medskip

With these observations we can proceed to the proof of Theorem~\ref{T:main}. 
Using Proposition~\ref{P:cyclic-order}, 
let us pick $(\xi_1, \xi_4)$ and $(\xi_2,\xi_3)$ that are unlinked and 
aligned in $\partial\Gamma$ while $(\phi(\xi_1), \phi(\xi_4))$ 
and $(\phi(\xi_2),\phi(\xi_3))$ are linked in a copy
of a hyperbolic plane $\partial \bfH^2$ contained in the hyperbolic space $\bfH^3$.
Let $p\in\bfH^3$ denote the intersection of the linked geodesic lines
$(\phi(\xi_1), \phi(\xi_4))$ and  $(\phi(\xi_2),\phi(\xi_3))$.
Since these two geodesic lines are distinct, $p\not\in (\phi(\xi_2),\phi(\xi_4))$,
and therefore, using the first fact, we obtain
\[
    \delta=\Bus_p(\phi(\xi_2),\phi(\xi_4))>0.
\]
We can now use the second fact, and find sequences $\{a_n\}$ and $\{b_n\}$ in
$\Gamma$, so that 
\[
    a_n^-\overto{}\xi_1, \qquad a_n^+\overto{}\xi_4,
    \qquad b_n^-\overto{} \xi_2,\qquad b_n^+\overto{} \xi_3.
\]
Denote $A_n=\pi(a_n)$ and $B_n=\pi(b_n)$ the corresponding elements in $\PSL_2(\bbC)$.
Note that $\phi(a_n^\pm)=A_n^\pm$ and $\phi(b_n^\pm)=B_n^\pm$ 
are the repelling/attracting points in $\partial\bfH^3$.
Upon replacing $a_n, b_n$ by their powers, we may assume that 
\[
    \ell_{\bfH^3}(A_n)\overto{}\infty,\qquad \ell_{\bfH^3}(B_n)\overto{}\infty.
\]
Let $p_n^A$ denote the projection of point $p$ to the geodesic line 
$(\phi(a_n^-),\phi(a_n^+))$ which is the axis $\Ax{A_n}$ in $\bfH^3$.
Since $\phi:\partial\Gamma\overto{}\partial\bfH^3$ is continuous,
$A_n^-=\phi(a_n^-)\overto{} \phi(\xi_1)$ and $A_n^+=\phi(a_n^+)\overto{} \phi(\xi_4)$.
This implies  
\[
    d_{\bfH^3}(p_n^A,p)\overto{} 0.
\]
Similarly, denoting by $p_n^B\in\bfH^3$ the projection of $p$ to the geodesic line 
$(\phi(\xi_2),\phi(\xi_3))$ which is the axis $\Ax{B_n}$ in $\bfH^3$, 
we get $d_{\bfH^3}(p_n^B,p)\overto{} 0$. 
\begin{figure}[h]
\centering
\begin{tikzpicture}[scale=4]
\draw[dashed, name path = B] (-120:1cm) node[below right]{$B_n^-$}  .. controls(0,.1) .. (45:1.1cm)node[above right]{$B_n^+$};
\draw[dashed, name path = A] (-1,.5) node[below left]{$A_n^+$} .. controls(0,.3) .. (1,.5)node[below right]{$A_n^-$};
\draw[name path = X1] (-1,.25) node[below]{$\phi(\xi_4)$} -- (1,.25) node[below]{$\phi(\xi_1)$};
\draw[name path = X2] (-140:1cm) node[below]{$\phi(\xi_2)$} -- (60:1cm) node[above]{$\phi(\xi_3)$};

\path[name path = PB] (-1,.1) -- (1,.15);
\path[name path = r] (-1,-.6) -- (1,-.6);
\path[name path = PA] (0,1) -- (-0.1,-1);
\path[name path = APA] (-.6,1) -- (-.6,-1);
\path[name path = xn] (-1,-.45) -- (1,-.45);
\path[name path = yn] (-.6,-1) -- (-.6,1);

\path [name intersections={of = X1 and X2}];
\filldraw (intersection-1) circle(.3pt) node[below right]{$p$};
\path [name intersections={of = B and PB}];
\filldraw (intersection-1) circle(.3pt) node[below right]{$p^B_n$};
\path [name intersections={of = B and r}];
\filldraw (intersection-1) circle(.3pt) node[below right]{$x_n=B_n^{-1}.p^B_n$};
\path [name intersections={of = A and PA}];
\filldraw (intersection-1) circle(.3pt) node[above right]{$p_n^A$};
\path [name intersections={of = A and APA}];
\filldraw (intersection-1) circle(.3pt) node[above]{$y_n=A_n.p^A_n$};
% \path [name intersections={of = X2 and xn}];
% \filldraw (intersection-1) circle(.3pt) node[right]{$x_n$};
% \path [name intersections={of = X1 and yn}];
% \filldraw (intersection-1) circle(.3pt) node[above]{$y_n$};
\end{tikzpicture}
\end{figure}

Now consider the points $x_n=B_n^{-1}.p_n^B$ and $y_n=A_n.p_n^A$.
Since $p_n^b=B^n. x_n$ and $x_n$ are on the axis $\Ax{B_n}$ of $B_n$ we have  $d_{\bfH^3}(p_n^b,x_n)=\ell_{\bfH^3}(B_n)$ and 
\begin{equation}\label{e:ellB}
      |d_{\bfH^3}(p,x_n)-\ell_{\bfH^3}(B_n)|\le d_{\bfH^3}(p,p_n^B)\overto{} 0.
\end{equation}
Similarly, 
\begin{equation}\label{e:ellA}
    |d_{\bfH^3}(p,y_n)-\ell_{\bfH^3}(A_n)|\le d_{\bfH^3}(p,p_n^A)\overto{} 0.
\end{equation}
Hence
\[
    \lim_{n\to\infty}x_n=\lim_{n\to\infty} A_n^+=\phi(\xi_2),\qquad
    \lim_{n\to\infty}y_n=\lim_{n\to\infty} B_n^-=\phi(\xi_4).
\]
Therefore 
\begin{equation}\label{e:Busemann}
    \lim_{n\to\infty}\left(d_{\bfH^3}(x_n,p)+d_{\bfH^3}(p,y_n)
        -d_{\bfH^3}(x_n,y_n)\right)
        =\Bus_p(\phi(\xi_2),\phi(\xi_4))=\delta>0.
\end{equation}
We also have 
\[
    \begin{split}
        d_{\bfH^3}\left((A_nB_n).x_n,y_n\right)&= d_{\bfH^3}(A_n.p_n^B,y_n)\\
        &= d_{\bfH^3}(A_n.p_n^B,A_n.p_n^A)=d_{\bfH^3}(p_n^B,p_n^A)\\
        &\le d_{\bfH^3}(p_n^B,p)+d_{\bfH^3}(p,p_n^A)\overto{} 0.
    \end{split}
\]
Using (\ref{e:ellB}), (\ref{e:ellA}), (\ref{e:Busemann}) we deduce
\[
    \lim_{n\to\infty} \left(\ell_{\bfH^3}(A_n)+\ell_{\bfH^3}(B_n)
    -d_{\bfH^3}(A_nB_n.x_n,x_n)\right)=\delta.
\]
Since $\ell_{\bfH^3}(A_nB_n)\le d_{\bfH^3}(A_nB_n.x_n,x_n)$,
it follows that
\[
    \liminf_{n\to\infty}\left(\ell_{\bfH^3}(A_n)+\ell_{\bfH^3}(B_n)
    -\ell_{\bfH^3}(A_nB_n)\right)\ge\delta.
\]
The latter fact can be rewritten as
\[
    \liminf_{n\to\infty}\left(\ell_q(a_n)+\ell_q(b_n)
    -\ell_q(a_nb_n)\right)\ge\delta.
\]
Recall that $(\xi_1,\xi_4)$ and $(\xi_2,\xi_3)$ are unlinked 
and aligned in $\partial\Gamma$, and are approximated by 
$(a_n^-,a_n^+)$ and $(b_n^-,b_n^+)$ respectively. 
Thus, we can find $k\in\bbN$ large enough, so that 
the pair of elements $a=a_k$, $b=b_k$ satisfy
\[
    \ell_{q}(a)+\ell_{q}(b)-\ell_{q}(ab)> \frac{1}{2}\delta
\]
while $(a^-,a^+)$ and $(b^-,b^+)$ are unlinked and aligned.
By Theorem~\ref{T:NC-ineq} the latter condition implies that for every $g\in\Rmn(\Sigma)$ we have
\[
    \ell_g(a)+\ell_g(b)-\ell_g(ab)<0.
\]
Thus
\[
    \frac{\ell_q(a)+\ell_q(b)}{\ell_q(ab)}:\frac{\ell_g(a)+\ell_g(b)}{\ell_g(ab)}
    > \frac{\ell_q(a)+\ell_q(b)}{\ell_q(ab)}
    > 1+\frac{\delta}{2\ell_q(ab)}.
\]
We deduce that for every $g\in\Rmn(\Sigma)$ we get
\[
    \rho_{\Lip}(\delta_q,\delta_g)> \log(1+\frac{\delta}{2\ell_q(ab)})>0.
\]
This completes the proof of Theorem~\ref{T:main}.

\begin{bibdiv}
\begin{biblist}
		\bib{BF}{article}{
            author={Bader, Uri},
            author={Furman, Alex},
            title={Some ergodic properties of metrics on hyperbolic groups},
            eprint={arXiv:1707.02020},
         }
        
		\bib{BM}{article}{
		   author={Bader, Uri},
		   author={Muchnik, Roman},
		   title={Boundary unitary representations---irreducibility and rigidity},
		   journal={J. Mod. Dyn.},
		   volume={5},
		   date={2011},
		   number={1},
		   pages={49--69},
		   issn={1930-5311},
		   review={\MR{2787597}},
		   doi={10.3934/jmd.2011.5.49},
		}
		\bib{BHM1}{article}{
		   author={Blach{\`e}re, S.},
		   author={Ha{\"{\i}}ssinsky, P.},
		   author={Mathieu, P.},
		   title={Harmonic measures versus quasiconformal measures for hyperbolic
		   groups},
		   journal={Ann. Sci. \'Ec. Norm. Sup\'er. (4)},
		   volume={44},
		   date={2011},
		   number={4},
		   pages={683--721},
		}
		\bib{BHM2}{article}{
		   author={Blach{\`e}re, S.},
		   author={Ha{\"{\i}}ssinsky, P.},
		   author={Mathieu, P.},
		   title={Asymptotic entropy and Green speed for random walks on countable
		   groups},
		   journal={Ann. Probab.},
		   volume={36},
		   date={2008},
		   number={3},
		   pages={1134--1152},
		}
		\bib{Burger}{article}{
            author={Burger, Marc},
            title={Intersection, the Manhattan curve, and Patterson-Sullivan theory
                in rank $2$},
            journal={Internat. Math. Res. Notices},
            date={1993},
            number={7},
            pages={217--225},
            issn={1073-7928},
            review={\MR{1230298}},
            doi={10.1155/S1073792893000236},
        }
		\bib{Cantrell+Tanaka}{article}{
            author={Cantrell, Stephen},
            author={Tanaka, Ryokichi},
            title={The Manhattan curve, Ergodci theory of topological flows and rigidity},
            eprint={arXiv:2104.13451},
         }
        
		\bib{Coor}{article}{
		   author={Coornaert, M.},
		   title={Mesures de Patterson--Sullivan sur le bord d'un espace hyperbolique
		   au sens de Gromov},
		   language={French, with French summary},
		   journal={Pacific J. Math.},
		   volume={159},
		   date={1993},
		   number={2},
		   pages={241--270},
		}
		\bib{Croke}{article}{
            author={Croke, Christopher B.},
            title={Rigidity for surfaces of nonpositive curvature},
            journal={Comment. Math. Helv.},
            volume={65},
            date={1990},
            number={1},
            pages={150--169},
            issn={0010-2571},
            review={\MR{1036134}},
            doi={10.1007/BF02566599},
        }
		\bib{Dalbo+Kim}{article}{
            author={Dal'Bo, Fran\c{c}oise},
            author={Kim, Inkang},
            title={Marked length rigidity for symmetric spaces},
            journal={Comment. Math. Helv.},
            volume={77},
            date={2002},
            number={2},
            pages={399--407},
            issn={0010-2571},
            review={\MR{1915048}},
            doi={10.1007/s00014-002-8346-y},
        }
		\bib{DK}{article}{
		      author={Dey, S.},
		      author={Kapovich, M.},
		       title={Patterson--Sullivan theory for Anosov subgroups},
				eprint={arXiv:1904.10196},
		}
		\bib{Fur}{article}{
		   author={Furman, A.},
		   title={Coarse-geometric perspective on negatively curved manifolds and
		   groups},
		   conference={
		      title={Rigidity in dynamics and geometry},
		      address={Cambridge},
		      date={2000},
		   },
		   book={
		      publisher={Springer},
		      place={Berlin},
		   },
		   date={2002},
		   pages={149--166},
		}
		\bib{Garn}{article}{
		    author={Garncarek, L.},
		    title={Boundary representations of hyperbolic groups},
		    eprint={arXiv:1404.0903},
		}
		\bib{MR}{book}{
            author={Maclachlan, Colin},
            author={Reid, Alan W.},
            title={The arithmetic of hyperbolic 3-manifolds},
                series={Graduate Texts in Mathematics},
            volume={219},
            publisher={Springer-Verlag, New York},
            date={2003},
            pages={xiv+463},
            isbn={0-387-98386-4},
            review={\MR{1937957}},
            doi={10.1007/978-1-4757-6720-9},
        }
        \bib{Otal}{article}{
            author={Otal, Jean-Pierre},
            title={Le spectre marqu\'{e} des longueurs des surfaces \`a courbure n\'{e}gative},
            language={French},
            journal={Ann. of Math. (2)},
            volume={131},
            date={1990},
            number={1},
            pages={151--162},
            issn={0003-486X},
            review={\MR{1038361}},
            doi={10.2307/1971511},
        }
        \bib{Vinberg}{article}{
            author={Vinberg, \`E. B.},
            title={Rings of definition of dense subgroups of semisimple linear
            groups. },
            language={Russian},
            journal={Izv. Akad. Nauk SSSR Ser. Mat.},
            volume={35},
            date={1971},
            pages={45--55},
            issn={0373-2436},
            review={\MR{0279206}},
        }
\end{biblist}
\end{bibdiv}

\end{document}